\newtheorem{theorem}{Theorem}[section]
\newtheorem{proposition}[theorem]{Proposition}
\newtheorem{lemma}[theorem]{Lemma}
\theoremstyle{definition}
\newtheorem{remark}[theorem]{Remark}
\numberwithin{equation}{section}
\newcommand{\C}{\mathbb C}
\newcommand{\Z}{\mathbb Z}
\begin{document}

\baselineskip=15.5pt

\title[Complex Lagrangians in a hyperK\"ahler manifold and Albanese]{Complex Lagrangians in a
hyperK\"ahler manifold and the relative Albanese}

\author[I. Biswas]{Indranil Biswas}

\address{School of Mathematics, Tata Institute of Fundamental
Research, Homi Bhabha Road, Mumbai 400005, India}

\email{indranil@math.tifr.res.in}

\author[T. G\'omez]{Tom\'as L. G\'omez}

\address{Instituto de Ciencias Matem\'aticas (CSIC-UAM-UC3M-UCM),
Nicol\'as Cabrera 15, Campus Cantoblanco UAM, 28049 Madrid, Spain}

\email{tomas.gomez@icmat.es}

\author[A. Oliveira]{Andr\'e Oliveira}

\address{Centro de Matem\'atica da Universidade do Porto (CMUP), Faculdade de Ci\^encias,
Rua do Campo Alegre, 687, 4169-007 Porto, Portugal
\newline\indent \textsl{On leave from:}\newline\indent Departamento de Matem\'atica, Universidade
de Tr\'as-os-Montes e Alto Douro, UTAD,\newline
Quinta dos Prados, 5000-911 Vila Real, Portugal}
\email{andre.oliveira@fc.up.pt / agoliv@utad.pt}

\subjclass[2010]{14J42, 53D12, 37K10, 14D21}

\keywords{HyperK\"ahler manifold, complex Lagrangian, integrable system, Liouville form, Albanese}

\date{}

\begin{abstract}
Let $M$ be the moduli space of complex Lagrangian submanifolds of a
hyperK\"ahler manifold $X$, and let 
$\varpi\, :\, \widehat{\mathcal{A}}\, \longrightarrow\, M$ be the relative Albanese over $M$.
We prove that
$\widehat{\mathcal{A}}$ has a natural holomorphic symplectic structure.
The projection $\varpi$ defines a
completely integrable structure on the symplectic
manifold $\widehat{\mathcal{A}}$. In particular, the
fibers of $\varpi$ are complex Lagrangians with respect to the symplectic form
on $\widehat{\mathcal{A}}$. We also prove analogous results for the relative Picard over $M$.
\end{abstract}

\maketitle

\section{Introduction}

A compact K\"ahler manifold admits a holomorphic symplectic form if and only if it
admits a hyperK\"ahler structure \cite{Be}, \cite{Ya}. To explain this, let $X$ be a compact manifold
equipped with almost complex structures $J_1,\, J_2,\, J_3$, and let $g$ be a Riemannian metric on $X$,
such that $(X,\, J_1,\, J_2,\, J_3,\, g)$ is a hyperK\"ahler manifold. Then $g$ defines a $C^\infty$ isomorphism,
$$
T^{0,1}X\, \stackrel{g_1}{\longrightarrow}\,(T^{1,0}X)^*\, ,
$$
where $T^{\mathbb R}X\otimes {\mathbb C}\,=\, T^{1,0}X\oplus T^{0,1}X$ is the type decomposition with
respect to the almost complex structure $J_1$; also $J_2$ produces a $C^\infty$ isomorphism
$$
T^{1,0}X\, \stackrel{J'_2}{\longrightarrow}\,T^{0,1}X\, .
$$
The composition of homomorphisms
$$
T^{1,0}X\, \stackrel{J'_2}{\longrightarrow}\,T^{0,1}X \, \stackrel{g_1}{\longrightarrow}\,(T^{1,0}X)^*\, ,
$$
which is a section of $(T^{1,0}X)^*\otimes (T^{1,0}X)^*$, is actually is a holomorphic symplectic form on the
compact K\"ahler manifold $(X,\, J_1,\, g)$. The compact K\"ahler manifold $(X,\, J_1,\, g)$ is Ricci--flat.
Conversely, if a compact K\"ahler manifold admits a
holomorphic symplectic form, then its canonical line bundle is holomorphically trivial and hence it admits
a Ricci--flat K\"ahler metric \cite{Ya}. Let $(X,\, J_1 ,\, g)$ be a Ricci--flat compact K\"ahler
manifold equipped with a holomorphic symplectic form. Then we may recover $J_2$ by reversing the above
construction. Finally, we have $J_3\,=\, J_1\circ J_2$.

Let $X$ be a compact K\"ahler Ricci--flat manifold admitting a holomorphic
symplectic form. Fix a Ricci--flat K\"ahler form $\omega$ on $X$ (such
a K\"ahler form exists \cite{Ya}), and take a holomorphic symplectic
form $\Phi$ on $X$. It is known that $\Phi$ is parallel (meaning, covariant
constant) with respect to the Levi--Civita connection on $X$
associated to $\omega$ \cite[p.~760, ``Principe~de~Bochner'']{Be},
\cite[p.~142]{YB}.

Let $M$ denote the moduli space of compact complex submanifolds of $X$
that are Lagrangian with respect to the symplectic form $\Phi$. Consider the
corresponding universal family of Lagrangians
\begin{equation}\label{ez}
{\mathcal Z}\, \longrightarrow\, M\, .
\end{equation}
Let
$$
\varpi\, :\, \widehat{\mathcal{A}}\, \longrightarrow\, M
$$
be the relative Albanese over $M$. So for any Lagrangian $L\, \in\, M$,
the fiber of $\widehat{\mathcal{A}}$ over $L$ is $\text{Alb}(L)\,=\,
H^0(L,\,\Omega^1_L)^*/H_1\,(L,\,\Z)$. This $\varphi$ is a holomorphic family
of compact complex tori over $M$.

We prove the following (see Theorem \ref{theo1}):

\textit{The complex manifold $\widehat{\mathcal{A}}$ has a natural holomorphic
symplectic form.}

The symplectic form on $\widehat{\mathcal{A}}$ is constructed using the
canonical Liouville symplectic form on the holomorphic cotangent bundle
$T^*M$ of $M$. The symplectic form $\Phi$ on $X$ is implicitly used in the
construction of the symplectic form on $\widehat{\mathcal{A}}$. Recall that the
Lagrangian submanifolds, and hence $M$, are defined using $\Phi$.

We prove the following (see Lemma \ref{lem-1}):

\textit{The projection $\varpi\, :\, \widehat{\mathcal{A}}\, \longrightarrow\, M$ defines a
completely integrable structure on the symplectic
manifold $\widehat{\mathcal{A}}$. In particular, the
fibers of $\varpi$ are Lagrangians with respect to the
symplectic form on $\widehat{\mathcal{A}}$.}

In Section \ref{se4} we consider the relative Picard bundle over $M$ for the 
family ${\mathcal Z}$ in \eqref{ez}. Let
$$
\varpi_0\, :\, \mathcal{A}\, \longrightarrow\, M
$$
be the relative Picard bundle for the family ${\mathcal Z}$.

We prove that $\mathcal{A}$ is equipped with a natural holomorphic symplectic structure;
see Proposition \ref{proposi1}.

Let $\Theta_{\mathcal A}$ denote the above mentioned holomorphic symplectic structure
on $\mathcal{A}$. The following lemma is proved (see Lemma \ref{lem2}):

\textit{The projection $\varpi_0\,:\, \mathcal{A}\, \longrightarrow\, M$ defines a
completely integrable structure on $\mathcal{A}$ for the symplectic
form $\Theta_{\mathcal A}$.}

These results are natural generalizations of some known cases of integrable systems, such as the
Hitchin system \cite{Hi1} or the Mukai system \cite{Mu} (see also \cite{DEL}). 
One of our motivations has been mirror symmetry; we hope to come back to the study of
$\mathcal{A}$ and $\widehat{\mathcal{A}}$ from the point of view of
hyperK\"ahler geometry.

\section{Cotangent bundle of family of Lagrangians}\label{sect2}

Let $X$ be a compact K\"ahler Ricci--flat manifold of complex dimension $2d$ equipped with a K\"ahler
form $\omega$. Let $\Phi$ be a holomorphic symplectic form on $X$ which is parallel
with respect to the Levi--Civita connection on $X$ given by the K\"ahler metric on $X$
associated to $\omega$.

A complex Lagrangian submanifold of $X$ is a compact complex submanifold $L\,\subset\, X$
of complex dimension $d$ such that $\iota^*\Phi\,=\,0$, where
\begin{equation}\label{e1}
\iota\, :\, L\, \hookrightarrow\, X
\end{equation}
is the inclusion map.

It is known that the infinitesimal deformations of a complex Lagrangian submanifold
of $X$ are unobstructed \cite{Mc}, \cite{Vo}, \cite{Hi2}. Furthermore, the moduli space
of complex Lagrangian submanifolds of $X$ is a special K\"ahler manifold \cite[p. 84,
Theorem 3]{Hi2}.

Let $M$ be the moduli space of complex Lagrangian submanifolds of $X$. Let
$$
L\,\subset\, X
$$
be a complex Lagrangian submanifold. The point of $M$ representing $L$ will also be
denoted by $L$. Let $N_L\, \longrightarrow\, L$ be the normal bundle
of $L\, \subset\, X$; it is a quotient bundle of $\iota^*TX$ of rank $d$, where $\iota$ is
the map in \eqref{e1}.
The infinitesimal deformations of the complex submanifold
$L$ are parametrized by $H^0(L,\, N_L)$. Since
$L$ is complex Lagrangian, the holomorphic symplectic form $\Phi$ on $X$ produces a holomorphic isomorphism
$$
N_L\,\stackrel{\sim}{\longrightarrow}\, (TL)^*\,=\, \Omega^1_L\, ,
$$
where $TL$ (respectively, $\Omega^1_L$) is the holomorphic tangent (respectively, cotangent)
bundle of $L$. Using this isomorphism we have
$$
H^0(L,\, N_L)\,=\, H^0(L,\, \Omega^1_L)\, .
$$
Among the infinitesimal deformations of the complex submanifold
$L$, there are those which arise from deformations within the category of complex Lagrangian submanifolds,
meaning those arise from deformations of complex Lagrangian submanifolds as Lagrangian submanifolds.
An infinitesimal deformation $$\alpha\, \in\, H^0(L,\, N_L)\,=\, H^0(L,\, \Omega^1_L)$$ of $L$ lies
in this subclass if and only if the holomorphic $1$--form $\alpha$
on $L$ is closed \cite[pp.~78--79]{Hi2}. But any holomorphic $1$--form on $L$ is closed because $L$ is K\"ahler.
Therefore, $M$ is in fact an open subset of the corresponding Douady space for $X$.

Consider the Ricci--flat K\"ahler form $\omega$ on $X$. The form
\begin{equation}\label{wl}
\omega_L\, :=\, \iota^*\omega
\end{equation}
on $L$ is also K\"ahler, where $\iota$ is the map in \eqref{e1}.
Therefore, the pairing
\begin{equation}\label{e2}
\phi_L\, :\, H^0(L,\, \Omega^1_L)\otimes H^1(L,\, {\mathcal O}_L)\, \longrightarrow\,
\mathbb C\, , \ \ w\otimes c\, \longmapsto\, \int_L w\wedge c\wedge\omega^{d-1}_L
\end{equation}
is nondegenerate. We shall identify $H^1(L,\, {\mathcal O}_L)$ with $H^0(L,\, \Omega^1_L)^*$
using this nondegenerate pairing.

It was noted above that
\begin{equation}\label{n1}
T_LM\,=\, H^0(L,\, N_L)\,=\, H^0(L,\, \Omega^1_L)\, .
\end{equation}
Using the pairing in \eqref{e2} and \eqref{n1}, we
have
\begin{equation}\label{e3}
T^*_LM\,=\, H^0(L,\, \Omega^1_L)^*\,=\, H^1(L,\, {\mathcal O}_L)\, .
\end{equation}

Let
\begin{equation}\label{e-1}
{\mathcal Z}\, \subset\, X\times M
\end{equation}
be the universal family of complex Lagrangians over $M$. So ${\mathcal Z}$ is the locus of
all $(x,\, L')\, \in\, X\times M$ such that $x\, \in\, L'
\, \subset\, X$. Consider the natural
projections $p_X\, :\, X\times M\, \longrightarrow\, X$ and $p_M\, :\, X\times M\, \longrightarrow\, M$. Let
\begin{equation}\label{pq}
p\, :\, {\mathcal Z}\, \longrightarrow\, X \ \ \text{ and }\ \
q\, :\, {\mathcal Z}\, \longrightarrow\, M
\end{equation}
be the restrictions of $p_X$ and $p_M$ respectively to the submanifold ${\mathcal Z}\, \subset\, X\times M$.
So $p(q^{-1}(L'))\, \subset\, X$ for every $L'\, \in\, M$ is
the Lagrangian $L'$ itself.

Let $\Omega^1_{{\mathcal Z}/M}\, \longrightarrow\, {\mathcal Z}$ be the relative cotangent bundle
for the projection $q$ to $M$ in \eqref{pq}. It fits in the short exact sequence of holomorphic vector bundles
$$
0 \, \longrightarrow\, q^*\Omega^1_M \, \longrightarrow\, \Omega^1_{\mathcal Z}
\, \longrightarrow\, \Omega^1_{{\mathcal Z}/M}\, \longrightarrow\, 0
$$
over $M$.
The direct image $R^1q_*{\mathcal O}_{\mathcal Z}$ fits in the short exact sequence of sheaves on $M$
\begin{equation}\label{a1}
0\, \longrightarrow\,R^0q_*\Omega^1_{{\mathcal Z}/M}\, \longrightarrow\, R^1q_*\underline{\mathbb C}
\, \longrightarrow\, R^1q_*{\mathcal O}_{\mathcal Z}\, \longrightarrow\, 0\, ,
\end{equation}
where $\underline{\mathbb C}$ is the constant sheaf on ${\mathcal Z}$ with stalk $\mathbb C$. The direct image
$R^1q_*\underline{\mathbb C}$ is a flat complex vector bundle, equipped with the Gauss--Manin connection.
We briefly recall the construction of the Gauss--Manin connection. For any point $L'\,=\, y \, \in\, M$, let
$U_y\, \subset\, M$ be a contractible open neighborhood of $y$. Since $U_y$ is contractible,
the inverse image $q^{-1}(U_y)$ is diffeomorphic to $U_y\times L'$ such that the diffeomorphism
between $q^{-1}(U_y)$ and $U_y\times L'$ takes $q$ to the natural projection from $U_y\times L'$ to $U_y$.
Using this diffeomorphism, the restriction of $R^1q_*\underline{\mathbb C}$ to $U_y$ coincides with the
trivial vector bundle
\begin{equation}\label{tvb}
U_y\times H^1(L',\, {\mathbb C}) \, \longrightarrow\, U_y
\end{equation}
with fiber $H^1(L',\, {\mathbb C})$. Using this isomorphism between $(R^1q_*\underline{\mathbb C})\vert_{U_y}$
and the trivial vector bundle $U_y\times H^1(L',\, {\mathbb C})$ in \eqref{tvb}, the trivial connection on
the trivial vector bundle in \eqref{tvb} produces a flat connection on $(R^1q_*\underline{\mathbb C})\vert_{U_y}$.
This connection on $(R^1q_*\underline{\mathbb C})\vert_{U_y}$ does not depend on the choice of the
diffeomorphism between $q^{-1}(U_y)$ and $U_y\times L'$. Consequently, these locally defined flat connections
on $R^1q_*\underline{\mathbb C}$ patch together compatibly to define a flat connection on
$R^1q_*\underline{\mathbb C}$. This flat connection is the Gauss--Manin connection mentioned above.

Since the Gauss--Manin connection on
$R^1q_*\underline{\mathbb C}$ is flat, and $M$ is a complex manifold, the Gauss--Manin connection
produces a natural holomorphic structure
on the $C^\infty$ vector bundle $R^1q_*\underline{\mathbb C}$. The direct image
$R^0q_*\Omega^1_{{\mathcal Z}/M}$ is a holomorphic subbundle of $R^1q_*\underline{\mathbb C}$,
but it is not preserved by the flat connection in general. The holomorphic structure
on the quotient $R^1q_*{\mathcal O}_{\mathcal Z}$ induced by that of $R^1q_*\underline{\mathbb C}$ coincides with 
its own holomorphic structure; the fiber of $R^1q_*{\mathcal O}_{\mathcal Z}$
over any $L'\, \in\, M$ is $H^1(L',\, {\mathcal O}_{L'})$.

Since $\omega_L$ in \eqref{wl} is the restriction of a global K\"ahler form on $X$, the section of
$R^2q_*\underline{\mathbb C}$
$$
M\,\longrightarrow\, R^2q_*\underline{\mathbb C}\, ,\ \ L' \,\longmapsto\, [\omega_{L'}]\,=\,
[\omega\vert_{L'}]\,\in\, H^2(L', \, {\mathbb C}) 
$$
is covariant constant with respect to the Gauss--Manin connection on $R^2q_*\underline{\mathbb C}$.
Consequently, the homomorphism
\begin{equation}\label{ep}
(R^1 q_*\underline{\mathbb C})\otimes (R^1 q_*\underline{\mathbb C})\, \longrightarrow\,
\underline{\mathbb C}\, ,
\end{equation}
that sends any $v\otimes w \, \in\,
(R^1 q_*\underline{\mathbb C})_t\otimes (R^1 q_*\underline{\mathbb C})_t$, $t\, \in\, M$,
to
$$
\int_{q^{-1}(t)} v\wedge w\wedge (\omega\vert_{q^{-1}(t)})^{d-1}\, \in\, \mathbb C
$$
is also covariant constant with respect to the connection on $(R^1 q_*\underline{\mathbb C})
\otimes (R^1 q_*\underline{\mathbb C})$ induced by the Gauss--Manin connection
on $R^1 q_*\underline{\mathbb C}$. Hence the pairing
in \eqref{ep} produces a holomorphic isomorphism of vector bundles
\begin{equation}\label{ep1}
(q_*\Omega^1_{{\mathcal Z}/M})^* \,\stackrel{\sim}{\longrightarrow} \, R^1q_*{\mathcal O}_{\mathcal Z}
\end{equation}
on $M$. The restriction of this isomorphism to any point $L\, \in\, M$ coincides
with the isomorphism $H^0(L,\, \Omega^1_L)^*\,=\, H^1(L,\, {\mathcal O}_L)$ in \eqref{e3}.

On the other hand, the pointwise isomorphisms in \eqref{n1} combine together
to produce a holomorphic isomorphism of vector bundles
\begin{equation}\label{ep2}
\Omega^1_M \,\stackrel{\sim}{\longrightarrow} \, (q_*\Omega^1_{{\mathcal Z}/M})^*
\end{equation}
on $M$. Composing the isomorphisms in \eqref{ep1} and \eqref{ep2}, we obtain
a holomorphic isomorphism of vector bundles
\begin{equation}\label{e4}
\chi\, :\, \Omega^1_M \,\longrightarrow \, R^1q_*{\mathcal O}_{\mathcal Z}
\end{equation}
over $M$.

For notational convenience, the total space of $R^1q_*{\mathcal O}_{\mathcal Z}$ will be
denoted by ${\mathcal Y}$. Let
\begin{equation}\label{e6}
\gamma\, :\, {\mathcal Y}\,\longrightarrow \, M
\end{equation}
be the natural projection. Consider the canonical Liouville $1$-form on $\Omega^1_M$.
Using the isomorphism $\chi$ in \eqref{e4} this Liouville $1$-form on $\Omega^1_M$ gives
a holomorphic $1$-form on ${\mathcal Y}$. Let
\begin{equation}\label{e5}
\theta\,\in\, H^0({\mathcal Y}, \, \Omega^1_{\mathcal Y})
\end{equation}
be the holomorphic $1$-form given by the Liouville $1$-form on $\Omega^1_M$. Note that
for any $L\,\in\, M$, and any $v\,\in\, \gamma^{-1}(L)$, the form
$$
\theta(v)\, :\, T_v{\mathcal Y}\,\longrightarrow \, \mathbb C
$$
coincides with the composition of homomorphisms
$$
T_v{\mathcal Y}\,\stackrel{d\gamma}{\longrightarrow}\, T_L M\,=\, H^0(L,\, \Omega^1_L)
\,\stackrel{\phi_L(-,v)}{\longrightarrow}\,\mathbb C\, ,
$$
where $\phi_L$ is the bilinear pairing constructed in \eqref{e2}, and $d\gamma$ is the
differential of the projection $\gamma$ in \eqref{e6}; the above identification $$T_L M\,=\,
H^0(L,\, \Omega^1_L)$$ is the one constructed in \eqref{n1}.

It is straight-forward to check that the $2$-form
\begin{equation}\label{e10}
d\theta\, \in\, H^0({\mathcal Y},\, \Omega^2_{\mathcal Y})
\end{equation}
is a holomorphic symplectic form on the manifold
${\mathcal Y}$ in \eqref{e6}. Indeed, $d\theta$ evidently coincides with
the $2$-form on ${\mathcal Y}$ given by the Liouville symplectic form on
$\Omega^1_M$ via the isomorphism $\chi$ in \eqref{e4}.

\section{The family of Albanese tori}

Take a compact complex Lagrangian submanifold $L\subset X$ represented by a point of $M$.
We know that $T_L^*M\,\cong\, H^0(L,\,\Omega^1_L)^*$
(see \eqref{n1}). Note that the non-degenerate pairing
$$
H^{1,0}(L)\otimes H^{d-1,d}(L)\, \longrightarrow\,
\mathbb C\, , \ \ \alpha\otimes \beta\, \longmapsto\, \int_L \alpha\wedge \beta\, ,
$$
which is also the Serre duality pairing, yields an isomorphism
$$H^0(L,\,\Omega^1_L)^*\,\cong\, H^d(L,\,\Omega^{d-1}_L)\, .$$

For a fixed $L$, we have the Hodge decomposition $$H^{2d-1}(L,\,\C)\,=\,H^{d,d-1}(L)\oplus H^{d-1,d}(L)\, ;$$
but if we move $L$ in the family $M$, meaning if we consider the universal family $$q\,:\,\mathcal{Z}
\, \longrightarrow\, M$$ in \eqref{e-1},
then only $R^{d-1}q_*\Omega^d_{{\mathcal Z}/M}$ is a holomorphic subbundle of $R^{2d-1}q_*\underline{\C}$,
and we have the short exact sequence of holomorphic vector bundles
\begin{equation}\label{n2}
0\, \longrightarrow\,R^{d-1}q_*\Omega^d_{\mathcal Z/M}\, \longrightarrow\, R^{2d-1}q_*\underline{\C}
\, \longrightarrow\, R^dq_*\Omega^{d-1}_{\mathcal Z/M}\, \longrightarrow\, 0\, ,
\end{equation}
on $M$.

The holomorphic vector bundle $R^{2d-1}q_*\underline{\mathbb C}$ is equipped with the Gauss--Manin connection,
which is an integrable connection. The quotient $R^dq_*\Omega^{d-1}_{\mathcal Z/M}$, in \eqref{n2},
of $R^{2d-1}q_*\underline{\mathbb C}$ is a holomorphic vector
bundle on $M$ with fiber $H^d(L',\,\Omega^{d-1}_{L'})$ over any $L'\, \in\, M$.

The homomorphism
\begin{equation}\label{ep4}
(R^1 q_*\underline{\mathbb C})\otimes (R^{2d-1} q_*\underline{\mathbb C})\, \longrightarrow\,
\underline{\mathbb C}\, ,
\end{equation}
that sends any $\alpha\otimes \beta \, \in\,
(R^1 q_*\underline{\mathbb C})_t\otimes (R^{2d-1} q_*\underline{\mathbb C})_t$, $t\, \in\, M$,
to
$$
\int_{q^{-1}(t)} v\wedge w \in\, \mathbb C
$$
is covariant constant with respect to the connection on $(R^1 q_*\underline{\mathbb C})\otimes
(R^{2d-1} q_*\underline{\mathbb C})$ induced by the Gauss--Manin connections on
$R^1 q_*\underline{\mathbb C}$ and $R^{2d-1} q_*\underline{\mathbb C}$. Consequently,
the pairing in \eqref{ep4} yields a holomorphic isomorphism of vector bundles
$$
(q_*\Omega^1_{{\mathcal Z}/M})^* \,\stackrel{\sim}{\longrightarrow} \, R^dq_*\Omega^{d-1}_{\mathcal Z/M}
$$
over $M$. Combining this isomorphism with the isomorphism in \eqref{ep2}
we get a holomorphic isomorphism of vector bundles
\begin{equation}\label{e4'}
\widehat{\chi}\, :\, \Omega^1_M \,\stackrel{\sim}{\longrightarrow} \, R^dq_*\Omega^{d-1}_{\mathcal Z/M}
\end{equation}
over $M$.

We shall denote the total space of the holomorphic vector bundle
$R^dq_*\Omega^{d-1}_{\mathcal Z/M}$ by $\mathcal{W}$, so
\begin{equation}\label{e11}
\mathcal{W}\, :=\, R^dq_*\Omega^{d-1}_{\mathcal Z/M}\, \stackrel{\widehat\gamma}{\longrightarrow}\, M
\end{equation}
is a holomorphic fiber bundle.

As in Section \ref{sect2}, consider the canonical Liouville
holomorphic $1$-form on the total space of $\Omega^1_M$. Using the isomorphism in \eqref{e4'},
this Liouville $1$-form on the total space of $\Omega^1_M$ produces a
holomorphic $1$-form on $\mathcal{W}$ in \eqref{e11}. Let
$$
\theta'\, \in\, H^0(\mathcal{W}, \, \Omega^1_{\mathcal{W}})
$$
be this holomorphic $1$-form on $\mathcal{W}$. We note that
\begin{equation}\label{e8}
d\theta'\, \in\, H^0(\mathcal{W}, \, \Omega^2_{\mathcal{W}})
\end{equation}
is a holomorphic symplectic form on $\mathcal{W}$. Indeed, the isomorphism in
\eqref{e4'} takes $d\theta'$ to the Liouville symplectic form on the
total space of $\Omega^1_M$.

\begin{remark}\label{rem0}
Note that while the K\"ahler form $\omega$ on $X$ was used in the construction of the
symplectic form $d\theta$ on $\mathcal Y$ in \eqref{e10} (see the pairing in \eqref{ep}), the construction of
$d\theta'$ in \eqref{e8} does not use the K\"ahler form $\omega$ on $X$. We recall that the isomorphism
in \eqref{ep2} is constructed from the the pointwise isomorphisms in \eqref{n1}. Note that the
isomorphism in \eqref{n1} does not depend on the K\"ahler form $\omega$.
\end{remark}

The Albanese $\text{Alb}(Y)$ of a compact K\"ahler manifold $Y$ is defined to be
$$
\text{Alb}(Y)\,=\, H^0(Y,\,\Omega^1_Y)^*/H_1\,(Y,\,\Z)\,=\, H^n(Y,\,\Omega^{n-1}_Y)/H_1\,(Y,\,\Z)\, ,
$$
where $n\,=\, \dim_{\mathbb C} Y$ (see \cite[p.~331]{GH}). It is a compact
complex torus.

For each point $L\,\in\, M$, consider the composition of homomorphisms
$$
H^{2d-1}(L,\,\Z)\,\longrightarrow\, H^{2d-1}(L,\,{\mathbb C})\,\longrightarrow\,
H^{2d-1}(L,\,{\mathbb C})/H^{d-1}(L,\,\Omega_L^d)\,=\, H^d(L,\,\Omega_L^{d-1})
$$
(see \eqref{n2}). It produces a homomorphism
\begin{equation}\label{e14}
R^{2d-1}q_*\underline{\Z}\, \longrightarrow\,\mathcal{W}\, ,
\end{equation}
where $\mathcal{W}$ is defined in \eqref{e11}.

\begin{remark}\label{r-1}
The Gauss--Manin connection on $R^{2d-1}q_*\underline{\mathbb C}\,\longrightarrow\, M$ evidently preserves
the subbundle of lattices $$R^{2d-1}q_*\underline{\mathbb Z}\,\subset\,R^{2d-1}q_*\underline{\mathbb C}\, .$$
From this it follows immediately that the $C^\infty$ submanifold $R^{2d-1}q_*\underline{\mathbb Z}\,\subset\,
{\mathcal W}$ in \eqref{e14} is in fact a complex submanifold.
\end{remark}

The quotient
\begin{equation}\label{e9}
\widehat{\mathcal{A}}\,:=\,\mathcal{W}/(R^{2d-1}q_*\underline{\Z})\,\longrightarrow\, M
\end{equation}
for the homomorphism in \eqref{e14}
is in fact a holomorphic family of compact complex tori over $M$. Note that the fiber
of $\widehat{\mathcal{A}}$ over each $L\,\in\, M$
is the Albanese torus $$\mathrm{Alb}(L)\,=\,H^d(L,\,\Omega_L^{d-1})/H^{2d-1}(L,\,\Z)\, .$$

For any complex Lagrangian $L\, \in\, M$, using Serre duality,
$$
H^d(L,\,\Omega_L^{d-1})\,=\, H^0(L,\,\Omega_L^1)^*\, ,
$$
and the underlying real vector space for $H^0(L,\,\Omega_L^1)^*$ is identified with
$$H^1(L,\, {\mathbb R})^*\,=\, H_1(L,\, {\mathbb R})\, .$$ Using Poincar\'e duality for $L$, we have
$H^{2d-1}(L,\,\Z)/\text{Torsion}\,=\, H_1(L,\,\Z)/\text{Torsion}$. Consequently,
$\widehat{\mathcal{A}}$ in \eqref{e9} admits the following isomorphism:
\begin{equation}\label{e12}
\widehat{\mathcal{A}}\, =\,\mathcal{W}/(R^{2d-1}q_*\underline{\Z})\,=\,
\mathcal{W}/\widetilde{H}_1(\Z)\,=\,
(q_*\Omega^1_{\mathcal Z/M})^*/\widetilde{H}_1(\Z)\, ,
\end{equation}
where $\widetilde{H}_1(\Z)$ is the local system on $M$ whose stalk over any $L\, \in\, M$ is
$H_1(L,\,\Z)/\text{Torsion}$. Also we have the isomorphism of real tori
\begin{equation}\label{e21}
\widehat{\mathcal{A}}\, =\, (R^1q_*\underline{\mathbb R})^*/\widetilde{H}_1(\Z)\,=\,
\widetilde{H}_1({\mathbb R})/\widetilde{H}_1(\Z)\, ,
\end{equation}
where $\widetilde{H}_1({\mathbb R})$ is the local system on $M$ whose stalk over any $L\, \in\, M$ is
$H_1(L,\,{\mathbb R})$.

\begin{theorem}\label{theo1}
The $2$-form $d\theta'$ in \eqref{e8} on ${\mathcal W}$ descends to the quotient torus
$\widehat{\mathcal A}$ in \eqref{e9}.
\end{theorem}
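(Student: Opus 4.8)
The plan is to prove that the $2$-form $d\theta'$ is invariant under the fibrewise translation action of the lattice bundle $R^{2d-1}q_*\underline{\Z}$ on $\mathcal W$; an invariant holomorphic $2$-form on $\mathcal W$ automatically descends to the quotient $\widehat{\mathcal A}\,=\,\mathcal W/(R^{2d-1}q_*\underline{\Z})$. Since descent is a local question on $M$, it suffices to fix a contractible open subset $U\,\subseteq\, M$, a flat section $s$ of $R^{2d-1}q_*\underline{\Z}$ over $U$, and to show that the corresponding translation $T_s$ of $\mathcal W|_U$ satisfies $T_s^*(d\theta')\,=\,d\theta'$.

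First I would transport the picture along the isomorphism $\widehat\chi$ in \eqref{e4'}: it identifies $\theta'$ with the canonical Liouville $1$-form on the total space of $\Omega^1_M$ and $d\theta'$ with the Liouville symplectic form, and it identifies $T_s$ with translation of $\Omega^1_M$ by the holomorphic $1$-form $\sigma\,:=\,\widehat\chi^{-1}\circ s$ on $U$. The standard behaviour of the Liouville form under translations gives $T_s^*\theta'\,=\,\theta'+\widehat\gamma^*\sigma$, hence $T_s^*(d\theta')\,=\,d\theta'+\widehat\gamma^*(d\sigma)$, where $\widehat\gamma$ is the projection in \eqref{e11}. Thus $T_s^*(d\theta')\,=\,d\theta'$ if and only if $\sigma$ is a \emph{closed} $1$-form on $U$, and the theorem is reduced to the assertion that $\widehat\chi^{-1}$ carries every flat local section of $R^{2d-1}q_*\underline{\Z}$ to a closed $1$-form on $M$.

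Next I would identify $\sigma$ concretely. By \eqref{e12} the map $\widehat\chi^{-1}$ is, up to the identification \eqref{n1}, the Serre-duality identification of $\mathcal W$ with $(q_*\Omega^1_{\mathcal Z/M})^*$, under which $R^{2d-1}q_*\underline{\Z}$ becomes the lattice $\widetilde H_1(\Z)$; a flat section $s$ then corresponds to a flat family of $1$-cycles $\gamma_L\,\subset\, L\,=\,q^{-1}(L)$, $L\,\in\, U$, and one computes that $\sigma\,=\,\beta_\gamma$, where for a vector field $\xi$ on $U$
$$
\beta_\gamma(\xi)(L)\,=\,\int_{\gamma_L}\iota_{\widetilde\xi}\,(p^*\Phi)\, .
$$
Here $p\,\colon\,\mathcal Z\,\longrightarrow\, X$ is the map in \eqref{pq} and $\widetilde\xi$ is any lift of $\xi$ to a vector field on $q^{-1}(U)\,\subset\,\mathcal Z$; the key fact is that, exactly as in \eqref{n1}, the form $\Phi$ induces $N_L\,\cong\,\Omega^1_L$ and the holomorphic $1$-form on $L$ representing $\xi$ is precisely $(\iota_{\widetilde\xi}\,p^*\Phi)|_L$ (in particular $\beta_\gamma(\xi)(L)$ does not depend on the lift $\widetilde\xi$). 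As anticipated in Remark \ref{rem0}, the K\"ahler form $\omega$ plays no role at this stage.

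It then remains to check that $d\beta_\gamma\,=\,0$. Over the contractible $U$, trivialise $\mathcal Z|_U\,\cong\, U\times L_0$ as in Section \ref{sect2}; the induced trivialisation of $R^{2d-1}q_*\underline{\Z}|_U$ is the Gauss--Manin one, so $s$ becomes a constant cycle and one may differentiate under the integral sign. Choosing the lifts $\widetilde\xi$ tangent to the $U$-factor, one obtains, for vector fields $\xi,\eta$ on $U$,
$$
d\beta_\gamma(\xi,\eta)\,=\,\int_{\gamma_L}\bigl(\mathcal L_{\widetilde\xi}\iota_{\widetilde\eta}(p^*\Phi)-\mathcal L_{\widetilde\eta}\iota_{\widetilde\xi}(p^*\Phi)-\iota_{[\widetilde\xi,\widetilde\eta]}(p^*\Phi)\bigr)\, .
$$
Since $\Phi$, and hence $p^*\Phi$, is closed, the Cartan-calculus identity $\iota_B\,d\iota_A\Psi-\iota_A\,d\iota_B\Psi+\iota_{[A,B]}\Psi\,=\,-d\bigl(\Psi(A,B)\bigr)$, valid for every closed $2$-form $\Psi$, shows that the integrand equals $-d\bigl((p^*\Phi)(\widetilde\xi,\widetilde\eta)\bigr)$, the exterior derivative of a function on $q^{-1}(U)$; its restriction to $L$ is exact, so its integral over the cycle $\gamma_L$ vanishes. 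Hence $\beta_\gamma$ is closed, and $d\theta'$ descends to $\widehat{\mathcal A}$. I expect the one genuinely delicate point to be the third step — matching $\widehat\chi^{-1}\circ s$ with the explicit $1$-form $\beta_\gamma$ built from $\Phi$, so that it is the closedness of $\Phi$ (not of $\omega$) that becomes available — after which the last step is a purely formal computation.
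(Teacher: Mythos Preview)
Your proposal is correct. Both your argument and the paper's reduce to the same core fact---translation of $\Omega^1_M$ by a $1$-form $\sigma$ preserves the Liouville symplectic form if and only if $\sigma$ is closed---and both deduce closedness ultimately from $d\Phi=0$. The executions, however, differ. The paper invokes Hitchin's local coordinate theorem from \cite{Hi2} (constructed by integrating $\mathrm{Re}(\Phi)$ along cycles), shows that in these coordinates the lattice $R^{2d-1}q_*\underline{\Z}$ is Lagrangian for $\mathrm{Re}(d\theta')$, and then uses that the lattice is a complex submanifold of $\mathcal W$ (Remark~\ref{r-1}) to upgrade from vanishing of the real part to vanishing of the full holomorphic $2$-form. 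Your route is more direct and self-contained: you identify $\sigma=\widehat\chi^{-1}\circ s$ explicitly as the period $1$-form $\beta_\gamma(\xi)=\int_{\gamma_L}\iota_{\widetilde\xi}\,p^*\Phi$ and verify $d\beta_\gamma=0$ by a short Cartan-calculus computation. The paper's approach buys an immediate link to the special-K\"ahler structure on $M$ established in \cite{Hi2}; yours avoids the real/complex detour entirely and makes the dependence on $d\Phi=0$ (and the independence of $\omega$ noted in Remark~\ref{rem0}) completely explicit.
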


\begin{proof}
Take a point $L_0\,\in\, M$. In \cite{Hi2} Hitchin constructed a $C^\infty$ coordinate function
on $M$ defined around the point $L_0\,\in\, M$
that takes values in $H^1(L_0,\, {\mathbb R})$ \cite[p.~79, Theorem 2]{Hi2}; we will briefly recall
this construction.

Take any
\begin{equation}\label{ec}
c\,\in\, H_1(L_0,\, {\mathbb Z})/{\rm Torsion}\, .
\end{equation}
Let $U\,\subset\, M$
be a contractible neighborhood of the point $L_0$. Choose a $S^1$-subbundle of
the fiber bundle $\mathcal Z$ (see \eqref{e-1}) over $U$
\begin{equation}\label{fb}
B\,\stackrel{\iota_U}{\hookrightarrow}\, {\mathcal Z}\vert_U \, \stackrel{q}{\longrightarrow}\, U
\end{equation}
such that the fiber of the $S^1$-bundle $B$ over $L_0$ represents the homology
class $c$ in \eqref{ec}; recall that the fiber of $\mathcal Z$ over the point $L_0\,\in\,
M$ is the Lagrangian $L_0$ itself.

Consider the symplectic form $\Phi$ on $X$. Integrating $\iota^*_Up^*
{\rm Re}(\Phi)$ along the fibers of $B$, where $\iota_U$ and $p$ are the maps in \eqref{fb}
and \eqref{pq} respectively, we get a closed $1$-form $\xi_c$ on $U$. Let $f_c$ be the
unique function on $U$ such that $f_c(L_0)\,=\, 0$ and $df_c\,=\, \xi_c$. Now, let
$$
\mu\, :\, U \, \longrightarrow\, H^1(L_0,\, {\mathbb R})
$$
be the function uniquely determined by the condition that $\phi_{L_x}(c\otimes \mu(x)) \,=\, f_c(x)$ for all
$x\,\in\, U$ and $c\,\in\, H_1(L_0,\, {\mathbb Z})/{\rm Torsion}$, where
$\phi_{L_x}$ is the pairing constructed as in \eqref{e2} for the complex Lagrangian $L_x\,=\, q^{-1}(x)
\, \subset\, X$, where $q$ is the projection
in \eqref{pq}. This $\mu$ is a local diffeomorphism \cite[p.~79, Theorem 2]{Hi2}.

Using the K\"ahler form $\omega_{L_0}\,:=\, \omega\vert_{L_0}$ on $L_0$ (see \eqref{wl}), we identify
$H^1(L_0,\, {\mathbb R})$ with $H_1(L_0,\, {\mathbb R})$ as follows. Since the pairing
$$
H^1(L_0,\, {\mathbb R})\otimes H^1(L_0,\, {\mathbb R})\,\longrightarrow {\mathbb R}\, , \ \
v\otimes w\, \longmapsto\, \int_L v\wedge w\wedge\omega^{d-1}_{L_0}
$$
is nondegenerate, it produces an isomorphism
\begin{equation}\label{ei}
H^1(L_0,\, {\mathbb R})\,\stackrel{\sim}{\longrightarrow}\,
H^1(L_0,\, {\mathbb R})^* \,=\, H_1(L_0,\, {\mathbb R})\, .
\end{equation}
On the other hand, there is the natural homomorphism $H^1(L_0,\, {\mathbb Z})\, \longrightarrow\,
H^1(L_0,\, {\mathbb R})$. Let
\begin{equation}\label{ga}
\Gamma\, \subset\, H_1(L_0,\, {\mathbb R})
\end{equation}
be the subgroup that corresponds to
$H^1(L_0,\, {\mathbb Z})$ by the isomorphism in \eqref{ei}.

Using the above coordinate function $\mu$ on $U$, we have
$$
{\mathcal T}^*U\,\stackrel{\sim}{\longrightarrow}\, {\mathcal T}^*\mu(U) \,=\,\mu(U)\times
H^1(L_0,\, {\mathbb R})^*\,=\, \mu(U)\times H_1(L_0,\, {\mathbb R})\, ,
$$
where ${\mathcal T}^*$ denotes the real cotangent bundle.

The Liouville symplectic form on ${\mathcal T}^*\mu(U)$ is clearly the constant $2$-form 
on $$H^1(L_0,\, {\mathbb R})\times H_1(L_0,\, {\mathbb R})$$ given by the
natural isomorphism of $H_1(L_0,\, {\mathbb R})$ with $H^1(L_0,\, {\mathbb R})^*$.
From this it follows immediately that
$$
\mu(U)\times \Gamma\, \subset\, \mu(U)\times H_1(L_0,\, {\mathbb R})
$$
is a Lagrangian submanifold with respect to the Liouville symplectic form on ${\mathcal T}^*\mu(U)$,
where $\Gamma$ defined in \eqref{ga}.

Since $\mu(U)\times \Gamma\, \subset\, \mu(U)\times H_1(L_0,\, {\mathbb R})$ is a Lagrangian submanifold,
it follows that for ${\mathcal W}\,=\,
(R^1q_*\underline{\mathbb R})^*$ (see \eqref{e21} and \eqref{e11}), the image of the natural map
$$
\widetilde{H}_1(\Z) \, \longrightarrow\, (R^1q_*\underline{\mathbb R})^*\,=\, {\mathcal W}
$$
(see \eqref{e12}) is Lagrangian with respect to the real symplectic form $\text{Re}(d\theta')$ on
$\mathcal W$, where $d\theta'$ is constructed in \eqref{e8}.

It was noted in Remark \ref{r-1} that $R^{2d-1}q_*\underline{\mathbb Z}$ is a complex submanifold of ${\mathcal W}$. 
Consequently, the $2$-form on $R^{2d-1}q_*\underline{\mathbb Z}$ obtained by restricting the 
holomorphic $2$-form $d\theta'$ on $\mathcal W$ is also holomorphic. Since the real part of the holomorphic 
$2$-from on $R^{2d-1}q_*\underline{\mathbb Z}$ given by $d\theta'$ vanishes identically, we conclude that the
holomorphic $2$-from on $R^{2d-1}q_*\underline{\mathbb Z}$, given by $d\theta'$, itself vanishes
identically. Therefore, $R^{2d-1}q_*\underline{\mathbb Z}$ is a Lagrangian submanifold of the holomorphic symplectic 
manifold ${\mathcal W}$ equipped with the holomorphic symplectic form $d\theta'$.

To complete the proof we recall a general property of the Liouville symplectic form.

Let $N$ be a manifold and $\alpha$ a $1$-form on $N$. Let
$$
t\, :\, T^*N \,\longrightarrow\, T^*N
$$
be the diffeomorphism that sends any $v \,\in\, T^*_nN$ to $v+\alpha(n)$. If $\psi$ is the
Liouville symplectic form on $T^*N$, the
$$
t^*\psi \,=\, \psi+d\alpha \, .
$$
In particular, the map $t$ preserves $\psi$ if and only if the form $\alpha$ is closed.
Also, the image of $t$ is Lagrangian submanifold of $T^*N$ for $\psi$ if and only
$\alpha$ is closed.

Since $$R^{2d-1}q_*\underline{\mathbb Z}$$ is a Lagrangian submanifold of the symplectic manifold
$({\mathcal W},\, d\theta')$, from the above property of the Liouville symplectic form it
follows immediately that the $2$-form $d\theta'$ on ${\mathcal W}$ descends to the quotient space
$\widehat{\mathcal A}$ in \eqref{e9}.
\end{proof}

Let
\begin{equation}\label{qa}
q_{\widehat{\mathcal A}}\, :\, {\mathcal W}\, \longrightarrow\, \widehat{\mathcal A}\, :=
\,{\mathcal W}/(R^{2d-1}q_*\underline{\Z})
\end{equation}
be the quotient map (see \eqref{e9}).
From Theorem \ref{theo1} we know that there is a unique $2$-form
\begin{equation}\label{ta}
\Theta_{\widehat{\mathcal A}}\, \in\, H^0(\widehat{\mathcal A},\, \Omega^2_{\widehat{\mathcal A}})
\end{equation}
such that
\begin{equation}\label{ta2}
q^*_{\widehat{\mathcal A}}\Theta_{\widehat{\mathcal A}}\,=\, d\theta'\, ,
\end{equation}
where $q_{\mathcal A}$ is the map
in \eqref{qa}. Since $d\theta'$ is a holomorphic symplectic form, it follows immediately that
$\Theta_{\widehat{\mathcal A}}$ is a holomorphic symplectic form on $\widehat{\mathcal A}$.

The projection $\widehat{\gamma}\, :\, {\mathcal W}\,\longrightarrow \, M$ in \eqref{e11} clearly
descends to a map from ${\mathcal A}$ to $M$. Let
\begin{equation}\label{vp}
\varpi\, :\, \widehat{\mathcal{A}}\, \longrightarrow\, M
\end{equation}
be the map given by $\widehat{\gamma}$; so we have
$$
\widehat{\gamma}\,=\, \varpi\circ q_{\widehat{\mathcal A}}\, ,
$$
where $q_{\widehat{\mathcal A}}$ is constructed in \eqref{qa}.

\begin{lemma}\label{lem-1}
The projection $\varpi$ in \eqref{vp} defines a
completely integrable structure on $\widehat{\mathcal{A}}$ for the symplectic
form $\Theta_{\widehat{\mathcal A}}$ constructed in \eqref{ta}. In particular, the
fibers of $\varpi$ are Lagrangians with respect to the
symplectic form $\Theta_{\widehat{\mathcal A}}$.
\end{lemma}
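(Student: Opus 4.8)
The plan is to verify the two defining conditions of a completely integrable system on $(\widehat{\mathcal A},\, \Theta_{\widehat{\mathcal A}})$: first, that the fibers of $\varpi$ are isotropic (equivalently, Lagrangian, once we know the dimension count), and second, that the base $M$ has half the dimension of $\widehat{\mathcal A}$, so that the fibers are indeed middle-dimensional. Since the quotient map $q_{\widehat{\mathcal A}}\,:\,{\mathcal W}\,\longrightarrow\,\widehat{\mathcal A}$ is a local biholomorphism and $q^*_{\widehat{\mathcal A}}\Theta_{\widehat{\mathcal A}}\,=\,d\theta'$ by \eqref{ta2}, while $\widehat\gamma\,=\,\varpi\circ q_{\widehat{\mathcal A}}$, it suffices to prove the analogous statement on ${\mathcal W}$: that the fibers of $\widehat\gamma\,:\,{\mathcal W}\,\longrightarrow\,M$ are Lagrangian for $d\theta'$. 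This reduces everything to a statement about the Liouville symplectic form.

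The key observation is that, via the isomorphism $\widehat\chi$ in \eqref{e4'}, the bundle ${\mathcal W}\,=\,R^dq_*\Omega^{d-1}_{{\mathcal Z}/M}$ is identified with the holomorphic cotangent bundle $\Omega^1_M\,=\,T^*M$, this identification is fiberwise linear and commutes with the projections to $M$, and it carries $d\theta'$ to the canonical Liouville symplectic form on $T^*M$ (this is exactly the content of the sentence following \eqref{e8}). Therefore I would argue directly on $T^*M$ with its Liouville form $\psi$: in local Darboux-type coordinates $(x_1,\dots,x_n,\xi_1,\dots,\xi_n)$ coming from local coordinates on $M$, one has $\psi\,=\,\sum d\xi_i\wedge dx_i$, the projection to $M$ is $(x,\xi)\,\longmapsto\,x$, and its fiber $\{x\,=\,\text{const}\}$ is cut out by $dx_1\,=\,\cdots\,=\,dx_n\,=\,0$, so $\psi$ restricts to zero on each fiber. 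Moreover $\dim_{\mathbb C} T^*M\,=\,2\dim_{\mathbb C}M$, so the fibers are middle-dimensional, hence Lagrangian; and the coordinate functions $x_1,\dots,x_n$ (i.e. the pullbacks to $\widehat{\mathcal A}$ of local coordinates on $M$ via $\varpi$) Poisson-commute and have independent differentials on a dense open set, which is precisely the statement that $\varpi$ defines a completely integrable structure. All of this descends to $\widehat{\mathcal A}$ because $q_{\widehat{\mathcal A}}$ is a local biholomorphism intertwining the symplectic forms and the two projections to $M$.

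There is essentially no obstacle here: the only point requiring a line of care is confirming that the identification ${\mathcal W}\,\cong\,T^*M$ provided by $\widehat\chi$ genuinely intertwines the projection $\widehat\gamma$ with the bundle projection $T^*M\,\longrightarrow\,M$ — but this is immediate since $\widehat\chi$ is by construction an isomorphism of vector bundles over $M$ (it is built in \eqref{e4'} from the bundle isomorphisms \eqref{ep2} and the Serre-duality isomorphism, all over $M$), and any vector bundle isomorphism over $M$ commutes with the structure projections. Thus the fibers of $\widehat\gamma$ are exactly the images under $\widehat\chi^{-1}$ of the fibers of $T^*M\,\longrightarrow\,M$, and the claim follows. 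I would remark in passing (as in Remark \ref{rem0}) that, unlike for $\mathcal Y$, the K\"ahler form $\omega$ plays no role here.
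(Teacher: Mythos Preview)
Your proposal is correct and follows essentially the same approach as the paper: identify $\mathcal W$ with $\Omega^1_M$ via $\widehat\chi$, note that this carries $d\theta'$ to the Liouville form and intertwines the projections, and then use that $q_{\widehat{\mathcal A}}$ is a local biholomorphism to transfer the standard completely integrable structure of $\Omega^1_M\longrightarrow M$ to $(\widehat{\mathcal A},\Theta_{\widehat{\mathcal A}},\varpi)$. Your write-up is more explicit (Darboux coordinates, dimension count) but the logical content is the same.
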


\begin{proof}
Recall that $\mathcal W$ is holomorphically identified with $\Omega^1_M$ by the map
$\widehat{\chi}$ in \eqref{e4'} (see \eqref{e11}).
This map $\widehat\chi$ takes the Liouville symplectic form on $\Omega^1_M$ to the
symplectic form $d\theta'$ on $\mathcal W$. Therefore, from \eqref{qa} and \eqref{ta2} we conclude that
$\widehat{\mathcal A}$ is locally isomorphic to $\Omega^1_M$ such that the projection $\varpi$ is taken to the
natural projection $\Omega^1_M\, \longrightarrow\, M$, and the symplectic form $\Theta_{\widehat{\mathcal A}}$
on $\widehat{\mathcal A}$ is taken to the Liouville symplectic form on $\Omega^1_M$. The lemma follows
immediately from these, because the natural projection $\Omega^1_M\, \longrightarrow\, M$ defines a
completely integrable structure on $\Omega^1_M$ for the Liouville symplectic form.
\end{proof}

\section{The relative Picard group}\label{se4}

For any $L\,\in\, M$, consider the homomorphisms
\begin{equation}\label{h1}
H^1(L,\, {\mathbb Z}) \,\longrightarrow \, H^1(L,\, {\mathbb C})\,\longrightarrow \, H^1(L,
\, {\mathcal O}_L)\, ,
\end{equation}
where $H^1(L,\, {\mathbb Z}) \,\longrightarrow \, H^1(L,\, {\mathbb C})$ is the natural
homomorphism given by the inclusion of $\mathbb Z$ in $\mathbb C$, and the projection
$H^1(L,\, {\mathbb C})\,\longrightarrow \, H^1(L,\, {\mathcal O}_L)$ corresponds to
the isomorphism $$H^1(L,\, {\mathbb C})/H^0(L,\, \Omega^1_L)\,=\,
H^1(L,\, {\mathcal O}_L)$$ (see \eqref{a1}).
The image of the composition of homomorphisms in \eqref{h1} is actually a cocompact lattice in $H^1(L, 
\, {\mathcal O}_L)$; so $H^1(L,\, {\mathcal O}_L)/H^1(L,\, {\mathbb Z})$ is a compact complex torus.
We note that the composition of homomorphisms in \eqref{h1} is in fact injective. When the compact
complex manifold $L$ is a complex projective variety, then
$H^1(L,\, {\mathcal O}_L)/H^1(L,\, {\mathbb Z})$ is in fact an abelian variety.

As $L$ moves over the family $M$, these cocompact lattices fit together to produce
a $C^\infty$ submanifold of the complex manifold ${\mathcal Y}$ in \eqref{e6}.

The Gauss--Manin connection on $R^1q_*\underline{\mathbb C}\,\longrightarrow\, M$ evidently preserves
the above bundle of cocompact lattices $R^1q_*\underline{\mathbb Z}\,\subset\,R^1q_*\underline{\mathbb C}$.
From this it follows immediately that the above $C^\infty$ submanifold $R^1q_*\underline{\mathbb Z}\,\subset\,
{\mathcal Y}$ is in fact a complex submanifold.

Taking fiber-wise quotients, we conclude that
\begin{equation}\label{e7}
{\mathcal A}\, := \,{\mathcal Y}/(R^1q_*\underline{\mathbb Z})\, \longrightarrow\, M
\end{equation}
is a holomorphic family of compact complex tori over $M$.

\begin{remark}\label{rem1}
Let $Y$ be a compact K\"ahler manifold. Consider the short exact sequence of sheaves on $Y$ given by the
exponential map
$$
0\, \longrightarrow\, \underline{\mathbb Z} \, \longrightarrow\, {\mathcal O}_Y
\, \stackrel{\lambda\mapsto \exp(2\pi\sqrt{-1}\lambda)}{\longrightarrow}\, {\mathcal O}^*_Y
\, \longrightarrow\, 0\, ,
$$
where ${\mathcal O}^*_Y$ is a multiplicative sheaf of holomorphic functions with values in
${\mathbb C}\setminus\{0\}$. For the corresponding long exact sequence of cohomologies
$$
H^1(Y,\, \underline{\mathbb Z}) \, \longrightarrow\,
H^1(Y,\, {\mathcal O}_Y) \, \longrightarrow\, H^1(Y,\, {\mathcal O}^*_Y) \, \stackrel{c_1}{\longrightarrow}\,
H^2(Y,\, \underline{\mathbb Z})\, ,
$$
where the connecting homomorphism $c_1$ sends any holomorphic line bundle
$\xi\, \in\, H^1(Y,\, {\mathcal O}^*_Y)$ to $c_1(\xi)$, the quotient
$$
H^1(Y,\, {\mathcal O}_Y)/H^1(Y,\, \underline{\mathbb Z})
$$
gets identified with the Picard group $\text{Pic}^0(Y)$ that parametrizes the topologically trivial
holomorphic line bundles on $Y$.

Consequently, the quotient $\mathcal{A}$ in \eqref{e7}
is naturally identified with the moduli space
${\rm Pic}^0_{{\mathcal Z}/M}$ of topologically trivial holomorphic line bundles on the
fibers of $q\, :\, {\mathcal Z}\, \longrightarrow\, M$. In other words, $\mathcal{A}$
parametrizes all pairs of the form $(L,\, \xi)$, where $L\,\in\, M$, and $\xi$ is a
topologically trivial holomorphic line bundle on $L$.
\end{remark}

Recall that we have the holomorphic symplectic form $d\theta$ on ${\mathcal
Y}$, where $\theta$ is constructed in \eqref{e5}.

\begin{remark}
Recall from Remark \ref{rem0} that $d\theta$ does depend on the K\"ahler form
$\omega$ on $X$.
\end{remark}

\begin{proposition}\label{proposi1}
The $2$-form $d\theta$ on ${\mathcal Y}$ descends to the quotient space
${\mathcal A}$ in \eqref{e7}, or in other words, $d\theta$ is the pullback of a $2$-form
on $\mathcal A$.
\end{proposition}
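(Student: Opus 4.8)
The strategy mirrors exactly that of Theorem \ref{theo1}, so I would set it up as a near-parallel argument. The key structural fact is that $\mathcal{Y}$ is, via the isomorphism $\chi$ in \eqref{e4}, holomorphically symplectomorphic to the total space of $\Omega^1_M$ with its Liouville symplectic form, and the submanifold $R^1q_*\underline{\mathbb Z}\subset\mathcal{Y}$ (which we already know is a complex submanifold) plays the role of the ``affine translation'' section. So the plan is: first, show that $R^1q_*\underline{\mathbb Z}$ is a Lagrangian submanifold of $(\mathcal{Y},\,d\theta)$; second, invoke the general property of the Liouville symplectic form recalled in the proof of Theorem \ref{theo1} — that a $1$-form on the base $M$ induces a symplectomorphism of $T^*M$ descending to quotients precisely when the form is closed, equivalently that translation by a Lagrangian section preserves the Liouville form — to conclude that $d\theta$ descends to $\mathcal{A}=\mathcal{Y}/(R^1q_*\underline{\mathbb Z})$.

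For the first step, I would argue as in Theorem \ref{theo1} using Hitchin's local coordinate $\mu\,:\,U\,\longrightarrow\,H^1(L_0,\mathbb R)$ on a contractible $U\subset M$ around a point $L_0$. Under this coordinate, $\mathcal{Y}$ restricted over $U$ is identified (as a real symplectic manifold, using $\mathrm{Re}(d\theta)$) with $\mathcal T^*\mu(U)=\mu(U)\times H_1(L_0,\mathbb R)$ carrying the standard Liouville form, and the lattice subbundle $R^1q_*\underline{\mathbb Z}$ becomes the constant family $\mu(U)\times\Gamma$, where $\Gamma\subset H_1(L_0,\mathbb R)$ is the image of $H^1(L_0,\mathbb Z)$ under the pairing-induced isomorphism \eqref{ei} — indeed this is precisely the lattice appearing in \eqref{h1}, since the projection $H^1(L,\mathbb C)\to H^1(L,\mathcal O_L)$ and the identification $H^1(L,\mathcal O_L)\cong H^0(L,\Omega^1_L)^*$ are exactly the maps built into $\chi$. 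A constant sublattice in a fiber of a cotangent bundle is manifestly Lagrangian for the Liouville form (the form is constant and vanishes on such a flat, horizontal-in-the-fiber submanifold, being a sum of $dp_i\wedge dq_i$ restricted to a set where all $dq_i=0$ and the $dp_i$ span an isotropic subspace). Hence $\mathrm{Re}(d\theta)$ vanishes on $R^1q_*\underline{\mathbb Z}$; since $R^1q_*\underline{\mathbb Z}$ is a complex submanifold (Remark \ref{rem0}'s analogue, stated just before \eqref{e7}) the restriction of the holomorphic form $d\theta$ is holomorphic, and a holomorphic $2$-form with vanishing real part vanishes identically. So $R^1q_*\underline{\mathbb Z}$ is a complex Lagrangian submanifold of $(\mathcal{Y},\,d\theta)$.

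The second step is then formal: fiberwise, the quotient $\mathcal{Y}\to\mathcal{A}$ is taking the quotient of $T^*_LM=H^1(L,\mathcal O_L)$ by the lattice, and translation by a fixed covector is exactly the map $t$ of the recalled lemma, which preserves the Liouville form iff the translating $1$-form is closed; since the lattice subbundle $R^1q_*\underline{\mathbb Z}$ is flat for the Gauss--Manin connection — which under $\chi$ corresponds to the sections being (locally) given by closed $1$-forms on $M$ — each local lattice translation is a symplectomorphism, so $d\theta$ is invariant under the deck transformations of $q_{\mathcal A}\,:\,\mathcal Y\to\mathcal A$ and therefore descends. The one point requiring a little care — and the only place where the argument is not purely mechanical — is checking that the lattice $R^1q_*\underline{\mathbb Z}$ really is the flat lattice $\mu(U)\times\Gamma$ under Hitchin's coordinates (not merely a flat lattice for some other flat structure), i.e. that Hitchin's coordinate $\mu$ is compatible with the Gauss--Manin flat structure on $R^1q_*\underline{\mathbb R}$ in the way needed; but this is already implicit in the proof of Theorem \ref{theo1}, where the identical identification is used for $\mathcal W=(R^1q_*\underline{\mathbb R})^*$, so I would simply cite that computation. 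I expect no genuine obstacle beyond bookkeeping: the real content is entirely contained in the fact that a bundle of full-rank flat lattices in a cotangent bundle is Lagrangian, which is the same phenomenon that drives Theorem \ref{theo1}.
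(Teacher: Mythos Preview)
Your proposal is correct and follows exactly the approach the paper intends: the paper's own proof simply says the argument is ``very similar to the proof of Theorem \ref{theo1}'' using Hitchin's local coordinates and omits the details, and your write-up supplies precisely those details in parallel form. One small slip in your parenthetical: on each component $\mu(U)\times\{\gamma\}$ it is the $dp_i$ that vanish (the fiber coordinate is constant), not the $dq_i$; but this does not affect the argument.
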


\begin{proof}
The proof of the proposition is very similar to the proof of Theorem \ref{theo1}.
As before, the local coordinate functions on $M$ constructed in \cite{Hi2} play a crucial
role. We omit the details of the proof.
\end{proof}

Let
\begin{equation}\label{qa2}
q_{\mathcal A}\, :\, {\mathcal Y}\, \longrightarrow\, {\mathcal A}\, :=
\,{\mathcal Y}/(R^1q_*\underline{\mathbb Z})
\end{equation}
be the quotient map (see \eqref{e7}). Let
$\Theta_{\mathcal A}\, \in\, H^0({\mathcal A},\, \Omega^2_{\mathcal A})$ be the
holomorphic symplectic form given by Proposition \ref{proposi1}, so
\begin{equation}\label{ta22}
q^*_{\mathcal A}\Theta_{\mathcal A}\,=\, d\theta\, ,
\end{equation}
where $q_{\mathcal A}$ is the map
in \eqref{qa2}.

The projection $\gamma$ in \eqref{e6} clearly
descends to a map
\begin{equation}\label{vp2}
\varpi_0\, :\, \mathcal{A}\, \longrightarrow\, M\, .
\end{equation}

Note that the isomorphism between $\mathcal{A}$ and ${\rm Pic}^0_{{\mathcal Z}/M}$ in
Remark \ref{rem1} takes $\varpi_0$ to the forgetful map
$$
{\rm Pic}^0_{{\mathcal Z}/M}\, \longrightarrow\, M
$$
that forgets the line bundle, or in other words, it sends any $(L,\, \xi)\,\in\,
{\rm Pic}^0_{{\mathcal Z}/M}$ to the complex Lagrangian $L$ forgetting the line bundle
$\xi$.

\begin{lemma}\label{lem2}
The projection $\varpi_0$ in \eqref{vp2} defines a
completely integrable structure on $\mathcal{A}$ for the symplectic
form $\Theta_{\mathcal A}$.
\end{lemma}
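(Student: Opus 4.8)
The plan is to run, almost verbatim, the argument used for Lemma~\ref{lem-1}, with ${\mathcal Y}$ playing the role of ${\mathcal W}$ and $d\theta$ playing the role of $d\theta'$. First I would recall that, via the isomorphism $\chi$ in \eqref{e4}, the total space ${\mathcal Y}$ of $R^1q_*{\mathcal O}_{\mathcal Z}$ is holomorphically identified with the total space of $\Omega^1_M$, and that this identification was set up precisely so that the holomorphic $1$-form $\theta$ in \eqref{e5} corresponds to the canonical Liouville $1$-form on $\Omega^1_M$; consequently $d\theta$ in \eqref{e10} corresponds to the Liouville symplectic form on $\Omega^1_M$.

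Next I would observe that the quotient map $q_{\mathcal A}\,:\,{\mathcal Y}\,\longrightarrow\,{\mathcal A}$ in \eqref{qa2} is a local biholomorphism: over $M$ it is the identity, while on each fiber it is the quotient of the complex vector space $H^1(L,\,{\mathcal O}_L)$ by the cocompact lattice coming from $H^1(L,\,{\mathbb Z})$, hence an \'etale covering map. By \eqref{ta22} we have $q^*_{\mathcal A}\Theta_{\mathcal A}\,=\,d\theta$, and by construction $\varpi_0\circ q_{\mathcal A}\,=\,\gamma$, where $\gamma$ is the projection in \eqref{e6}. Combining these with the first paragraph, every point of ${\mathcal A}$ has an open neighborhood biholomorphic to an open subset of the total space of $\Omega^1_M$ in such a way that $\varpi_0$ is carried to the bundle projection $\Omega^1_M\,\longrightarrow\, M$ and $\Theta_{\mathcal A}$ is carried to the Liouville symplectic form on $\Omega^1_M$.

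Finally I would invoke, exactly as in the proof of Lemma~\ref{lem-1}, the elementary fact that the bundle projection $\Omega^1_M\,\longrightarrow\, M$ defines a completely integrable structure for the Liouville symplectic form, its fibers being the cotangent spaces $T^*_LM$, which are Lagrangian and of complex dimension $\dim_{\mathbb C} M$, i.e. half of $\dim_{\mathbb C}\Omega^1_M$. Since being a completely integrable structure — and in particular the property of an individual fiber being Lagrangian — is a local condition on the total space, it transports through the local biholomorphisms above. Hence $\varpi_0$ defines a completely integrable structure on $({\mathcal A},\,\Theta_{\mathcal A})$, and the fibers of $\varpi_0$, which are the compact tori $H^1(L,\,{\mathcal O}_L)/H^1(L,\,{\mathbb Z})\,=\,\text{Pic}^0(L)$ of complex dimension $\dim_{\mathbb C} M$, are Lagrangian for $\Theta_{\mathcal A}$. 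There is no serious obstacle here; the argument is entirely parallel to that of Lemma~\ref{lem-1}, and the only point needing a word of care is that complete integrability and the Lagrangian condition descend along the \'etale quotient $q_{\mathcal A}$, which is immediate since both are checked pointwise on the total space.
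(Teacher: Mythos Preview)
Your proposal is correct and follows essentially the same approach as the paper's own proof: identify ${\mathcal Y}$ with $\Omega^1_M$ via $\chi$ so that $d\theta$ matches the Liouville form, use \eqref{ta22} and the fact that $q_{\mathcal A}$ is a local biholomorphism to conclude that $({\mathcal A},\Theta_{\mathcal A},\varpi_0)$ is locally modeled on $(\Omega^1_M,\text{Liouville form},\text{projection})$, and then invoke the complete integrability of the latter. Your write-up is in fact a bit more explicit than the paper's, which compresses the argument into three sentences.
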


\begin{proof}
Recall that $\mathcal Y$ is holomorphically identified with $\Omega^1_M$ by the map
$\chi$ in \eqref{e4}, and $\chi$ takes the Liouville symplectic form on $\Omega^1_M$ to the
symplectic form $d\theta$ on $\mathcal Y$. Therefore, from \eqref{qa} and \eqref{ta22} we conclude that
$\mathcal A$ is locally isomorphic to $\Omega^1_M$ such that the projection $\varpi_0$ is taken to the
natural projection $\Omega^1_M\, \longrightarrow\, M$, and the symplectic form $\Theta_{\mathcal A}$
on $\mathcal A$ is taken to the Liouville symplectic form on $\Omega^1_M$. The lemma follows
immediately from these.
\end{proof}

\section*{Acknowledgements}

We thank the referee for helpful comments.
The first author is partially supported by a J. C. Bose fellowship.
The second author is supported by 
Ministerio de Ciencia e Innovaci\'on of Spain
(grants MTM2016-79400-P, PID2019-108936GB-C21, and 
ICMAT Severo Ochoa project SEV-2015-0554) and CSIC
(2019AEP151 and \textit{Ayuda extraordinaria a Centros de Excelencia 
Severo Ochoa} 20205CEX001) The third author is supported by CMUP, financed by national funds
through FCT -- Funda\c{c}\~ao para a Ci\^encia e a Tecnologia, I.P., under the project with
reference UIDB/00144/2020. The second and third authors wish to thank 
TIFR Mumbai for hospitality and for the excellent conditions provided.

%%%%%%%%%%%%%%%%%%%%%%%%%%%%%%%%%%%%%%%%%%%%%%%%%%%%%%%%%%%%%%

\end{document}